\newtheorem{theorem}{Theorem}[section]
\newtheorem{corollary}[theorem]{Corollary}
\newtheorem{example}[theorem]{Example}
\newtheorem{assumption}[theorem]{Assumption}
\newtheorem{proposition}[theorem]{Proposition}
\newtheorem{definition}[theorem]{Definition}
\def\cB{\mathcal{B}}
\def\cD{\mathcal{D}}
\def\cF{\mathcal{F}}
\def\cL{\mathcal{L}}
\def\cO{\mathcal{O}}
\def\bE{\mathbb{E}}
\def\bN{\mathbb{N}}
\def\bP{\mathbb{P}}
\def\bR{\mathbb{R}}
\begin{document}

\title{ Pathwise regularity of solutions for a class of elliptic SPDEs with symmetric L\'evy noise}

\author{ Juan J. Jim\'enez \footnote{University of Ottawa, Department of Mathematics and Statistics, 150 Louis Pasteur Private, Ottawa, Ontario, K1G 0P8, Canada. E-mail address: jjime088@uottawa.ca.} }

\date{July 21, 2025}
\maketitle

\begin{abstract}
\noindent In this article, we investigate the existence and uniqueness of random‐field solutions to the elliptic SPDE $-\mathcal{L}u=\dot{\xi}$ on a bounded domain $D$ with Dirichlet boundary conditions $u=0$ on $\partial D$, driven by symmetric L\'evy noise $\dot{\xi}$. Under general sufficient conditions on the coefficients of the second‐order operator $\mathcal{L}$, we prove the existence of a mild solution via the corresponding Green's function and show that the same framework applies to the spectral fractional Laplacian of power $\gamma \in (0, \infty)$. In particular, whenever $\gamma>\tfrac d2$, the solution admits a continuous modification.

\end{abstract}

\noindent {\em MSC 2020:} Primary 60H15; Secondary 60G60, 60G51

\vspace{1mm}

\noindent {\em Keywords:} stochastic partial differential equations, Poisson random measure, L\'evy white noise, random field, Elliptic's equation, Poisson's equation, Dirichlet problem

\section{Introduction}
Let \((\Omega, \cF, \bP)\) be a complete probability space. Let \(D\) be a bounded domain in \(\bR^d\), and denote by \(\partial D\) its boundary, assumed to be of class \(C^\infty\). Additionally, let \(\cO\) be the \(\sigma\)-field on \(\Omega \times D\) generated by \(\cF \times \cB(D)\), where \(\cB(D)\) is the Borel \(\sigma\)-field on \(D\). In this article, we study the pathwise regularity properties of the solution to the stochastic linear elliptic equation driven by a Lévy noise:
\begin{equation}
\label{SPDE1}
    \begin{cases}
        -\cL u(x) = \dot{\xi}(x), \,&  x \in D,\\
        u(x) = 0, \,&  x \in\partial D,
    \end{cases}
\end{equation}
where \(\cL\) is a \emph{second-order linear elliptic operator}, and \(\dot{\xi}\) denotes a \emph{symmetric white Lévy noise} on \(D\) in the sense of \cite{RR89}, i.e.,  \(\dot{\xi} = \{\xi(A)\}_{A\in\cB(D)}\) is an \emph{independently scattered random measure} satisfying
\begin{equation}
\label{levy-eq}
\mathbb{E}\bigl[e^{i\,u\,\xi(A)}\bigr] \;=\; \exp\bigl(|A|\;\Psi(u)\bigr),
\quad \text{for all } A\in\cB(D),
\end{equation}
where \(\cB(D)\) is the Borel \(\sigma\)-algebra on \(D\), \(\lvert A\rvert\) denotes Lebesgue measure of \(A\), and
\[
\Psi(u)
= i\,b\,u \;-\;\frac{\sigma^2\,u^2}{2}
\;+\;\int_{\mathbb{R}}\Bigl(e^{i\,u\,z}-1 - i\,u\,z\,\mathbf{1}_{\{\lvert z\rvert\le1\}}\Bigr)\,\nu(dz),
\]
for some $b \in \bR$ and $\sigma>0$. Here, \(\nu\) is a Lévy measure on \(\mathbb{R}_0 := \mathbb{R}\setminus\{0\}\), satisfying
\[
\int_{\mathbb{R}_0}\bigl(|z|^2\wedge1\bigr)\,\nu(dz)<\infty,
\]
and symmetric in the sense that \(\nu(A)=\nu(-A)\) for all Borel sets \(A\subset\mathbb{R}_0\). We say that \(\dot{\xi}\) has  \textit{characteristic triplet} \((b,\sigma,\nu)\) if it satisfies \eqref{levy-eq}. Observe that $\dot{\xi}$ satisfies {\em L\'evy It\^o decomposition}: for all $A \in \cB (D)$,
\begin{equation}
    \label{levy-ito}
    \xi(A) = b |A| + \sigma W_\xi(A) + \int_{D \times \{|z| \le 1 \} } 1_A(x) z \widetilde J_{\xi}(dx,dz) + \int_{D \times \{|z| > 1 \} } 1_A (x) z J_{\xi}(dx,dz),
\end{equation}
where \(J_{\xi}\) is the Poisson random measure associated to \(\xi\) with intensity \(\mu(dy,dz)=dy\,\nu(dz)\), and \(\widetilde J_{\xi}=J_{\xi}-\mu\) its compensator. Finally, \(W_{\xi}\) denotes the continuous Gaussian white noise component of \(\xi\), with covariance,
\[
\mathbb{E}\bigl[W_{\xi}(A)\,W_{\xi}(B)\bigr]
= \sigma^2 \,\lvert A\cap B\rvert \, \, \, \, \text{for all $A,B \in \cB(D).$}
\]

We make the following clarifications. If $d=1$, we let $D = (a,b)$ for some $a, b \in \bR$ with $a < b$. We assume that $\cL$ is a second-order linear elliptic operator of the form
\begin{equation}
    \label{linear-elliptic}
    \cL = \sum_{i, j =1}^d a_{i,j}(x) \partial_{x_i x_j}  + \sum_{i=1}^d \tilde{b}_i (x) \partial_{x_i},
\end{equation}
satisfying the \textit{uniform ellipticity condition}:
\begin{equation}
    \label{u-eli}
    \kappa \| \zeta \|_{\bR^d}^2 \le \sum_{i, j =1 }^d a_{i,j}(x) \zeta_i \zeta_j \le \Lambda \| \zeta \|_{\bR^d}^2, \quad \text{for all } \zeta = (\zeta_1,\ldots,\zeta_d) \in \bR^d,\ x \in D,
\end{equation}
for some constants $0 < \kappa < \Lambda$, where \(\|\cdot\|_{{\bR^d}}\) is the Euclidean norm on \(\mathbb{R}^d\). We also assume that the entries of the matrix \(\mathbf{A}(x) = \{a_{i,j}(x)\}_{i,j}\) belong to \(L^{\infty}(D)\). Additionally, we assume that \(\tilde{b}_i \in L^{\infty}(D)\) and \(\tilde{b}_i \ge 0\) for all \(i=1,\dots,d\).

 In the special case \(\mathbf{A}(x) = I_{d\times d}\), the identity matrix, the operator \(\cL\) reduces to the Laplace operator,
\[
\Delta = \sum_{i=1}^d \frac{\partial^2}{\partial x_i^2}.
\]
We denote by \(G_{\Delta}\) the Green function of \(-\Delta\).

In this work we use the following notation:
\begin{itemize}
    \item \(C^0(D)\) is the space of continuous functions on \(D\).
    \item \(L^p(D)\) is the usual Lebesgue space on \(D\).
    \item We denote by \(\mathrm{div}\) the divergence operator and by \(\nabla\) the gradient. Their norms in \(L^p(D)\) are \(\|\cdot\|_p\), and in \(L^\infty(D)\) it is \(\|\cdot\|_\infty\).
    \item \(\overline{D}\) is the topological closure of \(D\) in the usual topology.
    \item Here, \(\|\cdot\|\) denotes the usual Euclidean matrix norm on \(\mathbb{R}^{d\times d}\), defined by \(\|A\| = \sup_{ \|x\|_{\bR^d} =1} \|Ax \|_{{\bR^d}}\).

\end{itemize}

It is well known that SPDEs can be formulated and analyzed via different frameworks. One is the \textit{random field approach} originating in the seminal work \cite{walsh86} by Walsh, and another is the \textit{semigroup approach} by Da Prato and Zabczyk (see \cite{DZ14}). Both approaches have been successfully developed in parallel; however, there are few investigations that study the connections between them. One such work is \cite{dalang-quer11}. In particular, the setting of this work is the random field approach together with the stochastic integration techniques of \cite{RR89}. We say that a $\cO$-measurable stochastic process $u= \{ u(x) \; ;  \; x \in D \}$ is a \textit{mild solution} if it satisfies:
\begin{equation}
    \label{mild-spde}
    u(x) = \int_D G_{\cL} (x,y) \xi(dy),
\end{equation}
where $G_{\cL} (x, \cdot)$ is the Green's function of $-\cL$ embedded in Dirichlet conditions. The goal of this article is to show the existence of a mild solution of \eqref{SPDE1} that satisfies:
\begin{itemize}
    \item[(i)] \(u\) is a \emph{generalized solution} of \eqref{SPDE1}, in the sense of Definition 3.4 in \cite{DH}. 
    \item[(ii)] The map \(\omega\in\Omega\mapsto u(\omega,\cdot)\) takes almost surely values in a \emph{fractional Sobolev space} of order \(r\in\mathbb{R}\).
    \item[(iii)] We extend the results for elliptic operators to a class of spectral operators. In particular, we work with a spectral power of the Laplace operator.
\end{itemize}

For our main results, we only require an integrability condition on the Lévy measure \(\nu\) for the small jumps \(\{|z|\le1\}\), as shown in the following table. \hfill \break

\begin{table}[h!]
    \centering
    \setlength{\tabcolsep}{9pt} 

\begin{tabular}{|c|c|c|c|}
  \hline
  \textbf{Dimension \( d \)} 
    & \textbf{Triplet \( (b,\sigma,\nu) \)} 
    & \textbf{Lévy Measure Condition} 
    & \textbf{Value of \(p\)} \\ 
  \hline
  \( d \le 3 \) 
    & \((b,\sigma,\nu)\) 
    & \(\displaystyle \int_{\{|z|\le1\}}|z|^p\,\nu(dz)<\infty\) 
    & \(p=2\) \\ 
  \hline
  \( d \ge 4 \) 
    & \((b,0,\nu)\) 
    & \(\displaystyle \int_{\{|z|\le1\}}|z|^p\,\nu(dz)<\infty\) 
    & \(0<p<\tfrac{d}{d-2}\) \\ 
  \hline
\end{tabular}
\caption{Existence conditions for mild solutions to \eqref{SPDE1} as given in Theorem \ref{existence-1}, assuming the singularity of \(G_{\cL}(x,y)\) at \(x=y\) behaves as that of \(G_{\Delta}(x,y)\).}
\end{table}

Although there is no direct reference for the path properties of solutions to \eqref{SPDE1}, the study of continuity for spatial heavy-tailed random fields has been active since the late 20th century. In \cite{KM91}, Hölder continuity of sample paths was proved for a class of one-dimensional self-similar stable processes. Similarly, in the one-dimensional setting, \cite{CC92} established the modulus of continuity for non-Gaussian \(L^2\)-random fields. In \cite{MR05}, continuity and boundedness of spatial infinitely divisible fields were proved for stochastic Wiener integrals with respect to a Poisson random measure. Finally, \cite{xiao2010} proved the uniform modulus of continuity for heavy-tailed random fields in \(\mathbb{R}^d\), including \(\alpha\)-stable random fields.

Finally, in the remainder of this section,  we briefly discuss the regularity theory for deterministic elliptic PDEs and provide the main references relevant to \eqref{SPDE1}.

Stochastic elliptic PDEs on bounded domains have been widely studied since Walsh’s seminal work \cite{walsh86}. For instance, the existence and regularity of solutions to the quasilinear elliptic problem, for \(d \le 3\),
\begin{equation}
\label{poisson-1}
    \begin{cases}
        -\Delta u (x) = b(u(x)) + \sigma \dot{W}(x), \,& x \in D,\\
        u(x) = 0, \,& x \in \partial D,
    \end{cases}
\end{equation}
have been investigated in \cite{BGN80,BP90,DN92,R80,NT95,T98} in the case where \(W\) is \emph{white Gaussian noise}, \(\sigma \in \mathbb{R}\), and \(b\colon \mathbb{R} \to \mathbb{R}\). In particular, \cite{BP90} proves the existence and uniqueness of a solution that is almost surely continuous on \(\overline{D}\). In \cite{walsh86}, it was shown that, when \(b=0\), the solution of \eqref{poisson-1} is a generalized solution and takes values in a suitable Sobolev space.

More recently, in \cite{sole-viles18} it was shown that a solution of systems of the form \eqref{poisson-1} possess Hölder‐continuous paths, and upper and lower bounds were established for the corresponding hitting probabilities.

In contrast to the extensive study of \eqref{SPDE1} and \eqref{poisson-1} when the noise \(\dot{\xi}\) is a Gaussian white noise, there are few works that systematically study \eqref{SPDE1} in the case where \(\dot{\xi}\) has a nontrivial jump component. To the best of our knowledge, \cite{lokka} is the only work in literature that have as a main focus of study of equation \eqref{SPDE1} where $\dot{\xi}$ is a pure-jump L\'evy noise, under a \textit{finite-variance setting}, i.e.,
\begin{equation}
    \label{finite-var}
    \int_{\bR_0} |z|^2 \nu (dz) < \infty \Leftrightarrow \bE[|\xi(A) |^2] < \infty, \; \; \text{for all $A \in \cB(D)$}.
\end{equation}
Here, $\xi(A) = \int_{A \times \mathbb{R}_0} z \,\hat{N}(dy,dz)$, where \(\hat{N}\) is the \textit{compensated random measure} associated with the \textit{Poisson random measure} (PRM) \(N\) on \(D \times \mathbb{R}_0\) having intensity \(\mu(dy,dz)=dy\,\nu(dz)\). In the same context, see \cite{LD18}.

Although, there are interesting cases of L\'evy noises satisfying \eqref{finite-var} such as the \textit{variance-gamma L\'evy noises}, many other case of interested does not satisfy \eqref{finite-var} such as the \textit{symmetry $\alpha$-stable L\'evy noise}, i.e., $b=\sigma=0$, and
\begin{equation}
    \label{stable}
   \nu(dz)=\nu_{\alpha}(dz)=\frac{1}{2}\alpha |z|^{-\alpha-1}dz, \quad \text{for $\alpha \in (0,2).$} 
\end{equation}
In this work, we construct a solution of \eqref{SPDE1} driven by a general symmetric L\'evy noise $\dot{\xi}$  that covers the cases \eqref{finite-var} and \eqref{stable}. Moreover, under this framework, we do not require that $\dot{\xi}$ has moments of any order.

Regarding the analogue of \eqref{SPDE1} on the whole space \(\mathbb{R}^d\), i.e.
\begin{equation}
\label{spde-Rd}
- \cL u(x) = \dot{\xi}(x), \quad x \in \mathbb{R}^d,
\end{equation}
has been studied in \cite{berger,DH}. Interestingly, Theorem 6.13 in \cite{DH} establishes that no mild solution to \eqref{spde-Rd} exists when \(\dot{\xi}\) is a symmetric \(\alpha\)-stable Lévy noise on \(\mathbb{R}^d\) for any \(d \ge 1\). The existence of a generalized solution of \eqref{spde-Rd} and of second‐order elliptic SPDEs on \(\bR^d\) has been studied in \cite{berger}, along with moment estimates. Furthermore, \cite{berger} also established the existence of a mild solution in \(d=3\) for the electric Schr\"odinger equation under suitable moment conditions over the large jumps of the L\'evy measure.

 On  the other hand, in \cite{CAD22} the solutions of SPDEs driven by a spatial \(L^2\)-random field with specified functional covariance were studied, including \eqref{spde-Rd}, as well as the heat and wave equations.

The path properties of parabolic SPDEs driven by a general Lévy white noise in space–time were studied in \cite{CDH19}.

\section{Preliminaries}

In this section, we briefly discuss the stochastic integration framework used in this work, and the basic ingredients of fractional Sobolev spaces in bounded domains.

\subsection{Stochastic integration}

Although various authors had introduced stochastic integrals for random measures since Paul Lévy’s seminal work \cite{levy37}, the theory in \cite{RR89} was pioneering in treating integrators that may not depend on time and instead rely solely on the topology of the spatial component, making this approach particularly suitable for solving elliptic SPDEs.

We say that $f$ is a {\em simple function} if it is given by
\begin{equation}
\label{NR-simple}
f(x) = \sum_{i=1}^n \alpha_i 1_{A_i}(x),
\end{equation}
where $A_1,\ldots, A_n$ are disjoint sets in $\mathcal{B}(D)$ and $\alpha_1,\ldots,\alpha_n \in \bR$. We denote by $\overline{\mathcal{S}}$ the collection of simple functions of the form \eqref{NR-simple}. If $f$ is a simple function of the form
\eqref{NR-simple}, we define the stochastic integral of $f$ with respect to $\dot{\xi}$ by:
\[
\langle \dot{\xi},f \rangle = \int_D f(x) \xi(dx) =  \sum_{i=1}^n \alpha_i \xi(A_i),
\]
and stochastic integral of $f$ on the set $A\in \mathcal{B}_b(D)$, with respect to $\dot{\xi}$, by:
\[
\langle \dot{\xi},f 1_A \rangle =  \int_A f(x) \xi(dx) =  \sum_{i=1}^n \alpha_i \xi(A\cap A_i).
\]
Note that $\langle \dot{\xi},1_A \rangle = \xi(A)$ for all $A \in \cB( D )$.
\begin{definition}
    A Borel measurable function $f: D \to \bR$ is  \textit{$\dot{\xi}$-integrable} if there exists a sequence $\{ f_n \}_{n \ge 0}$ in $\overline{\mathcal{S}}$ such that
    \begin{itemize}
        \item[i)] $f_n \to f$ a.s. for $n \to + \infty$,
        \item[ii)] for every $A \in \mathcal{B} (D)$, the sequence $\{ \langle \dot{\xi},f_n 1_A \rangle \}_{n \ge 0}$ converges in probability, as $n \to + \infty$.
    \end{itemize}
If $f$ is $\dot{\xi}$-integrable, we set
\begin{equation}
\label{stochastic-int}
\langle \dot{\xi},f \rangle  \stackrel{\bP}{=} \lim_{n \to + \infty} \langle \dot{\xi},f_n \rangle.
\end{equation}
\end{definition}

In \cite{RR89}, one finds a complete characterization of the deterministic functions integrable with respect to \(\dot{\xi}\). More precisely, by Theorem 2.7 in \cite{RR89}, a $\cB(D)$-measurable function \(f\colon D\to\mathbb{R}\) is \(\dot{\xi}\)-integrable if and only if
\begin{equation}
   \label{cond-i}
\int_D \bigl|b\,f(x)\bigr|\,dx < \infty,\quad
\int_D \bigl|\sigma\,f(x)\bigr|^2\,dx < \infty,
\quad\text{and}\quad
\int_{D\times\mathbb{R}_0}\bigl(|z\,f(x)|^2 \wedge 1\bigr)\,dx\,\nu(dz) < \infty.
\end{equation}
    Moreover, if $f$ is $\dot{\xi}$-integrable, then
    \begin{equation}
    \label{LK}
      \bE \left[ e^{i u \langle \dot{\xi},f \rangle } \right]=  \exp{\Big( \int_D \Psi ( f(x) ) dx  \Big)}.
    \end{equation}

\subsection{Fractional Sobolev spaces}

In this section, we briefly recall the main ingredients of fractional Sobolev spaces on \(D\). Consider the eigenvalue problem for the Dirichlet Laplacian:
\begin{align*}
    -\Delta e_n(x) & = \lambda_n e_n(x), \quad x \in D, \\
    e_n(x) & = 0, \quad \quad \quad \quad x \in \partial D,
\end{align*}
where \( \Delta \) is the Laplacian operator. The eigenvalues \( \{\lambda_n\}_{n=1}^\infty \) are positive, satisfy \( \lambda_n \to \infty \) as \( n \to \infty \), and the corresponding eigenfunctions \( \{e_n\}_{n=1}^\infty \) can be chosen to form an orthonormal basis of \( L^2(D) \) with respect to the inner product:
\[
    \langle f, g \rangle_{L^2(D)} = \int_D f(x) \overline{g(x)} \, dx.
\]
Let $g \in L^2 (D)$, we define the Fourier coefficients of $g$ with respect to \( \{e_n\}_{n=1}^\infty \) by
\[
\cF_k[g] =   \langle g, e_k \rangle_{L^2(D)}, \; \; \text{for each $k \in \bN$.}
\]

Let \(\mathcal{D}(D)\) denote the space of test functions on \(D\), and let \(\mathcal{D}'(D)\) be its dual, i.e., the space of distributions on \(D\). Abusing notation, we will use the same symbol as in \eqref{stochastic-int} for the distributional action on \(\cD(D)\).  That is, for any \(F\in\cD'(D)\), we write
\[
F(\phi)=\langle F,\phi\rangle,\quad\text{for all }\phi\in\cD(D).
\]

Let $E_0$ be the set of functions $f: D \to \mathbb{R}$ of the form
\[
f(x) = \sum^K_{i =1 } \alpha_k e_k (x), \quad \text{for $\alpha_i \in  \mathbb{R}$,}
\]
and define, for $r \in \mathbb{R}$,
\[
\| f \|_{H_r(D)} := \Bigg( \sum_{k = 1}^{\infty} \lambda_k^r | \cF_k [ f] |^2 \Bigg)^{1/2}.
\]

The completion of $E_0$ with respect to $\| \cdot \|_{H_r}$ is called the \textit{Sobolev space of order r}. We denote this space as $H_r (D)$. Each element $\Phi \in H_r(D)$ can be identified with a series of the form $ \Phi = \sum_{k = 1}^{\infty} a_k (\Phi) e_k$, where $a_k ( \Phi) \in \mathbb{R}$, and
\begin{equation}
    \label{sobolev-1}
    \| \Phi \|_{H_r(D)} : = \Bigg( \sum_{k=1}^{\infty} \lambda_k^r a_k ( \Phi )^2  \Bigg)^{1/2} < \infty. 
\end{equation}

The series defining $\Phi$ converges in the topology of  \(\mathcal{D}'(D)\) and in the $H_r(D)$-norm. Hence, 
\[
\mathcal{F}_j [\Phi]= \sum_{k =1}^{\infty} a_k (\Phi) \cF_j [e_k ] = a_j (\Phi).
\]
 By Parseval's identity, we can set $H_{-r}(D)$ as the dual of $H_r (D).$

The next result follows directly from Theorem 9.8 in Chapter 1 of \cite{lion-mag68} together with Lemma 2.18 in \cite{CDH19}.

\begin{theorem}
\label{cont-embdd}
If \(r > \tfrac{d}{2}\), then \(H_r(D)\subset C^0(\overline{D})\).
\end{theorem}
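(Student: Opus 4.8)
The plan is to prove the embedding $H_r(D) \subset C^0(\overline D)$ for $r > d/2$ by combining the classical Sobolev embedding theorem with the identification of the spectrally-defined space $H_r(D)$ with a standard Sobolev space. Since the statement is advertised as following from Theorem 9.8 (Ch.~1) of \cite{lion-mag68} together with Lemma 2.18 in \cite{CDH19}, I would structure the argument around the classical chain of embeddings
\[
H_r(D) \hookrightarrow H^{r}(D) \hookrightarrow C^0(\overline D),
\]
where $H^r(D)$ denotes the usual $L^2$-based Sobolev space of (possibly fractional) order $r$. The key conceptual point is that the space $H_r(D)$, defined here spectrally through the eigenfunction expansion $\|f\|_{H_r(D)}^2 = \sum_k \lambda_k^r |\cF_k[f]|^2$, coincides (as a set, with equivalent norm) with the classical Sobolev space $H^r(D)$ when $r \ge 0$ and with appropriate boundary conditions absorbed into the scale.

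\textbf{Step 1: Identification of the scale.} First I would invoke Lemma 2.18 of \cite{CDH19} to identify $H_r(D)$, defined via the spectral decomposition of the Dirichlet Laplacian, with the classical fractional Sobolev space $H^r(D)$ (equivalently, the domain of $(-\Delta)^{r/2}$ equipped with the graph norm). Concretely, the eigenvalues $\lambda_k$ grow like $k^{2/d}$ by Weyl's law, so the weight $\lambda_k^r$ measures exactly $r$ derivatives in the $L^2$ sense, and the resulting norm is equivalent to the classical $H^r$-norm on the relevant subspace. This equivalence is precisely what lets us transfer an $H^r$ embedding statement to an $H_r$ statement.

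\textbf{Step 2: Apply the classical Sobolev embedding.} Once $H_r(D)$ is identified with $H^r(D)$, I would apply the continuous embedding theorem for Sobolev spaces on a smooth bounded domain — Theorem 9.8 in Chapter 1 of \cite{lion-mag68} — which gives, for $r > d/2$, the continuous inclusion $H^r(D) \hookrightarrow C^0(\overline D)$. The hypothesis that $\partial D$ is of class $C^\infty$ guarantees the extension/trace machinery needed for this embedding on the bounded domain $D$. Composing the identification of Step 1 with this classical embedding yields the claimed inclusion $H_r(D) \subset C^0(\overline D)$.

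\textbf{Main obstacle.} I expect the genuinely delicate step to be Step 1, the identification of the abstract spectral space $H_r(D)$ with the concrete Sobolev space $H^r(D)$. For integer $r$ this is standard elliptic regularity for the Dirichlet Laplacian, but for fractional $r$ one must be careful: the domains of spectral fractional powers of $-\Delta$ with Dirichlet boundary conditions do \emph{not} in general coincide with the full classical fractional Sobolev spaces once $r$ is large enough to see boundary conditions (there is a well-known discrepancy at half-integer thresholds $r = \tfrac12, \tfrac32, \dots$ where Dirichlet conditions begin to be imposed). However, for the purpose of the embedding into $C^0(\overline D)$ this subtlety is harmless: the spectral space $H_r(D)$ is always continuously contained in the corresponding classical space of the same order, so the inclusion into $C^0(\overline D)$ goes through regardless of whether the two scales agree exactly. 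Thus rather than proving a full norm equivalence, it suffices to establish the one-sided continuous inclusion $H_r(D) \hookrightarrow H^r(D)$, which is exactly the content extracted from \cite{CDH19}, and then chain it with \cite{lion-mag68}.
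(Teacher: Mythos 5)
Your proposal is correct and follows essentially the same route as the paper: the one-sided continuous inclusion \(H_r(D)\hookrightarrow H^r(D)\) from Lemma 2.18 of \cite{CDH19}, chained with the classical embedding \(H^r(D)\hookrightarrow C^0(\overline{D})\) for \(r>\tfrac{d}{2}\) from Theorem 9.8 of Chapter 1 in \cite{lion-mag68}. Your observation in the final paragraph --- that only the one-sided inclusion is needed, so the half-integer discrepancy between the spectral and classical scales is harmless --- is exactly the right reading of how the paper uses these two references.
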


\begin{proof}
Let \(H^r(D)\) be the interpolation Sobolev space defined in (9.1) of Chapter 1 in \cite{lion-mag68}.  By Lemma 2.18 in \cite{CDH19}, one has \(H_r(D)\hookrightarrow H^r(D)\) for all \(r\ge0\).  The result then follows from Theorem 9.8 of Chapter 1 in \cite{lion-mag68}.
\end{proof}

\section{Stochastic elliptic PDEs}
In this section, we prove the existence of a random‐field solution and present examples of elliptic operators to which our theory applies.

\subsection{Existence of a solution}

In this section, we prove the existence of a mild solution to \eqref{SPDE1}, i.e.
\[
u(x) \;=\; \langle \dot{\xi},\,G_{\cL}(x,\cdot)\rangle.
\]
We begin by stating the main assumptions on \(\cL\), and we include a brief discussion of the basic ideas used throughout this section.

\begin{assumption} 
\label{ass-g}
    We assume that the Green function of $\cL$ satisfies the following properties:
    \begin{itemize}
        \item[(i)] For $d \ge 3$,
        \begin{equation}
            \label{d3g}
                G_{\cL} (x,y) \le C \|x - y \|_{\bR^d}^{2-d}, \quad \text{for all $x,y \in D$ with $x \neq y$.}
        \end{equation}
        \item[(ii)] For $d \leq 2$,
        \begin{equation}
            \label{d2g}
            G_{\cL} (x,y) \le C G_{\Delta} (x,y)  \quad \text{for all $x,y \in D$ with $x \neq y$}.
        \end{equation}
    \end{itemize}
\end{assumption}

If \(\dot{\xi}\) is a symmetric \(\alpha\)-stable Lévy noise, then under Assumption~\ref{ass-g} we have that
\[
\int_D G_{\cL}^\alpha(x,y) \,dy < \infty, 
\]
for all $x \in D$ for $\alpha$ satisfying the restriction on Table \ref{cond-alpha}. 

\begin{table}[h!]
        \centering
        \renewcommand{\arraystretch}{1.5} 
        \setlength{\tabcolsep}{15pt} 
        \begin{tabular}{|c|c|}
            \hline
            \textbf{Dimension \( d \)} & \textbf{Stability Parameter \( \alpha \)} \\ \hline
            \( d \leq 3 \) & \( 0 < \alpha < 2 \) \\ \hline
            \( d \geq 4 \) & \( 0 < \alpha < \frac{d}{d-2} \) \\ \hline
        \end{tabular}
        \caption{ \label{cond-alpha} Conditions for the existence of solutions to \eqref{SPDE1} when $\dot{\xi}$ is a is a symmetric \(\alpha\)-stable Lévy noise.}
\end{table}

It makes sense to use the notation \(\langle \cdot,\cdot\rangle\) for both the stochastic integral and the distributional action on \(\cD(D)\).  Indeed, consider the map
\[
F_{\dot{\xi}}\colon \Omega \times \cD(D) \;\longrightarrow\; \mathbb{R},
\qquad
F_{\dot{\xi}}(\omega,\varphi) = \big\langle \dot{\xi},\,\varphi\big\rangle (\omega).
\]
This map is well defined for all \(\varphi\in\cD(D)\) and is a random linear functional on \(\cD(D)\).  Moreover, \(F_{\dot{\xi}}\) is continuous in probability.  Hence, by Corollary 4.2 in \cite{walsh86},
\[
\mathbb{P}\bigl(F_{\dot{\xi}}\in \cD'(D)\bigr) = 1.
\]
In summary, \(\langle \dot{\xi},\cdot\rangle\) on \(\cD(D)\) is almost surely a distribution in \(\cD'(D)\).

On the other hand, for any $\varphi \in \cD(D)$, we have that
\begin{equation}
    \label{elp1}
    \begin{cases}
        - \mathcal{L} u (x)  = \varphi(x), \,  & x \in D,  \\
        u(x) = 0, \, & x \in \partial D,
    \end{cases}
\end{equation}
has a unique solution in $C^{\infty} ( \overline{D})$, and satisfies $u = G_{\cL} \circledast \varphi$, where
\[
( G_{\cL} \circledast \varphi) (x) = \int_D G_{\cL}(x,y) \varphi (y) dy.
\]

We say that a random linear functional \(u_{\mathrm{gen}}\) is a generalized solution to \eqref{SPDE1} if, for every \(\varphi \in \cD(D)\),
\begin{equation}
    \label{weak-s}
    \big\langle u_{\mathrm{gen}}, \varphi \big\rangle
    = \big\langle \dot{\xi},\,G_{\cL} \circledast \varphi \big\rangle.
\end{equation}

The question of when a mild solution is also a generalized solution has been studied in \cite{DH} for a wide variety of SPDEs, including \eqref{spde-Rd}. However, \cite{DH} does not cover elliptic SPDEs on bounded domains. One of the motivations of this section is to complement the results in \cite{DH} for \eqref{SPDE1}. Note that a generalized solution is also referred to as a weak solution.

\begin{theorem}
\label{existence-1}
Let \(\dot{\xi}\) be a symmetric Lévy white noise with characteristic triplet \((b,\sigma,\nu)\), and suppose Assumption \ref{ass-g} holds. Additionally, if \(d \ge 4\), assume \(\sigma = 0\) and
\begin{equation}
\label{con-int}
\int_{|z|\le 1} |z|^p \,\nu(dz) < \infty
\quad\text{for some }p \in \Bigl(0,\tfrac{d}{d-2}\Bigr).
\end{equation}
Then \eqref{SPDE1} admits a unique mild solution \(u\). Moreover, \(u\) is also a generalized solution to \eqref{SPDE1} in the sense that it satisfies \eqref{weak-s}.
\end{theorem}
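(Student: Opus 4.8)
The plan is to prove existence by verifying, for each fixed $x\in D$, the integrability criterion \eqref{cond-i} applied to $f=G_{\cL}(x,\cdot)$; then deduce uniqueness and construct an $\cO$-measurable version; and finally establish the generalized-solution identity \eqref{weak-s} by a stochastic Fubini argument. For existence I would check the three conditions of \eqref{cond-i} in turn. The drift term $\int_D|b\,G_{\cL}(x,y)|\,dy$ is finite in every dimension, since by Assumption~\ref{ass-g} the singularity of $G_{\cL}(x,\cdot)$ is at worst $\|x-y\|_{\bR^d}^{2-d}$ for $d\ge3$ (respectively the logarithmic or bounded $G_{\Delta}$ for $d\le2$), and these are locally integrable on $D$. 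For the Gaussian term, passing to polar coordinates shows $\|x-\cdot\|_{\bR^d}^{2(2-d)}\in L^1(D)$ exactly when $d\le3$, which is why $\sigma$ may remain nonzero for $d\le3$ and must be set to $0$ for $d\ge4$, as assumed, making that term vanish. The essential point is the jump term $\int_{D\times\bR_0}\bigl(|z\,G_{\cL}(x,y)|^2\wedge1\bigr)\,dy\,\nu(dz)$, which I split over $\{|z|>1\}$ and $\{|z|\le1\}$. On $\{|z|>1\}$ the integrand is at most $1$, giving a contribution bounded by $|D|\,\nu(\{|z|>1\})<\infty$ because $\nu$ is a Lévy measure. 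On $\{|z|\le1\}$, for $d\le3$ I use $t\wedge1\le t$ to bound the integrand by $|z|^2G_{\cL}(x,y)^2$ and factor the integral as $\bigl(\int_{|z|\le1}|z|^2\nu(dz)\bigr)\bigl(\int_D G_{\cL}(x,y)^2\,dy\bigr)$, both finite; for $d\ge4$ I instead use the elementary inequality $t\wedge1\le t^{p/2}$, valid for $0<p\le2$ and $t\ge0$, to bound it by $|z|^pG_{\cL}(x,y)^p$ and factor as $\bigl(\int_{|z|\le1}|z|^p\nu(dz)\bigr)\bigl(\int_D G_{\cL}(x,y)^p\,dy\bigr)$. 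The first factor is finite by \eqref{con-int}, and the second is finite because $\int_D\|x-y\|_{\bR^d}^{(2-d)p}\,dy<\infty$ if and only if $p<\tfrac d{d-2}$, precisely the hypothesis; note $\tfrac d{d-2}\le2$ for $d\ge4$, so $p<2$ and the inequality $t\wedge1\le t^{p/2}$ indeed applies.

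With integrability established, $u(x)=\langle\dot\xi,G_{\cL}(x,\cdot)\rangle$ is well defined for each $x$, and uniqueness is immediate: the stochastic integral is the unique limit in probability defining \eqref{stochastic-int}, so any two mild solutions agree a.s.\ at each $x$. To obtain the $\cO$-measurable version required of a random field, I would show that $x\mapsto G_{\cL}(x,\cdot)$ is continuous from $D$ into the space of $\dot\xi$-integrable functions endowed with the topology generated by the three seminorms in \eqref{cond-i}; the continuity of $f\mapsto\langle\dot\xi,f\rangle$ then makes $x\mapsto u(x)$ continuous in probability, which yields a jointly measurable modification.

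For the generalized-solution identity \eqref{weak-s} I would prove a stochastic Fubini theorem. Fix $\varphi\in\cD(D)$ and a sequence of tagged partitions $\{D_i^{(n)}\}_i$ of $D$ with mesh tending to $0$ and tags $x_i^{(n)}\in D_i^{(n)}$, and set $g_n:=\sum_i\varphi(x_i^{(n)})\,|D_i^{(n)}|\,G_{\cL}(x_i^{(n)},\cdot)$. By linearity of the integral, $\langle\dot\xi,g_n\rangle=\sum_i\varphi(x_i^{(n)})\,|D_i^{(n)}|\,u(x_i^{(n)})$, which is exactly a Riemann sum of $\int_D u(x)\varphi(x)\,dx$. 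Meanwhile $g_n$ is a Riemann approximation of $G_{\cL}\circledast\varphi$, and I would show $g_n\to G_{\cL}\circledast\varphi$ in the topology of \eqref{cond-i}; continuity of the integral then gives $\langle\dot\xi,g_n\rangle\to\langle\dot\xi,G_{\cL}\circledast\varphi\rangle$ in probability. Matching the two limits yields \eqref{weak-s}, with $\langle u,\varphi\rangle$ read as the in-probability limit of the Riemann sums, equivalently $\int_D u\,\varphi$ on the measurable version.

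I expect this last interchange to be the main obstacle, for two reasons. First, $\dot\xi$ carries no moments, so no $L^1$ or $L^2$ stochastic Fubini is available; the entire passage to the limit must be carried out in probability, exploiting the \emph{exact} identity $\langle\dot\xi,g_n\rangle=\sum_i\varphi(x_i^{(n)})|D_i^{(n)}|\,u(x_i^{(n)})$ together with uniqueness of limits in probability. Second, the convergence $g_n\to G_{\cL}\circledast\varphi$ must be verified in the nonlinear jump seminorm $\int_{D\times\bR_0}\bigl(|z(g_n-g)|^2\wedge1\bigr)\,dy\,\nu(dz)$, which is not induced by a norm; I would control it with the same $t\wedge1\le t^{p/2}$ (resp.\ $t\wedge1\le t$) device used for existence, reducing the question to $L^p$ (resp.\ $L^2$) convergence of the Riemann sums. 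The delicate part there is to dominate the approximating kernels $g_n$ by a single integrable function uniformly in $n$, so that the pointwise convergence $g_n\to G_{\cL}\circledast\varphi$ (which holds because $G_{\cL}\circledast\varphi\in C^\infty(\overline D)$) upgrades to convergence in $L^p(D)$; this uniform integrability of the weighted Green kernels is the technical heart of the argument.
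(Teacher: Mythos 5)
Your existence argument is correct and is essentially the paper's: both verify the Rajput--Rosi\'nski criterion \eqref{cond-i} for $f=G_{\cL}(x,\cdot)$, and your handling of the truncation (splitting over $\{|z|\le1\}$ versus $\{|z|>1\}$ and using $t\wedge1\le t^{p/2}$) is a clean variant of the paper's splitting over the set where $G_{\cL}(x,\cdot)>1$; the drift/Gaussian checks and the restriction $p<\tfrac{d}{d-2}$ match, and your uniqueness remark is the same triviality as in the paper.

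The genuine gap is in the generalized-solution step. Your plan is to match two limits of the same Riemann sums $S_n=\sum_i\varphi(x_i^{(n)})\,|D_i^{(n)}|\,u(x_i^{(n)})=\langle\dot\xi,g_n\rangle$: on one side $\langle\dot\xi,G_{\cL}\circledast\varphi\rangle$ (via convergence of $g_n$ in the topology of \eqref{cond-i}), on the other side $\int_D u\varphi\,dx$. But the second convergence is precisely the content of the stochastic Fubini theorem and is not automatic: for $d\ge2$ the path $u(\omega,\cdot)$ is a.s.\ unbounded, with singularities at the atoms of $J_\xi$ (compare the paper's final Proposition, where $\|u\|_\infty=\infty$ a.s.), so it is not Riemann integrable, and Riemann sums with arbitrary tags need not converge to the Lebesgue integral. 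Uniqueness of limits in probability only tells you that $S_n$ converges to $\langle\dot\xi,G_{\cL}\circledast\varphi\rangle$; it does not identify that limit with $\int_D u\varphi\,dx$, which is the identity \eqref{weak-s} you are trying to prove --- the argument is circular. A second omission: before \eqref{weak-s} even makes sense you must know $u(\omega,\cdot)\in L^1(D)$ a.s., which you never establish. The paper addresses exactly these two points: it decomposes $u=I_b+I_W+I_s+I_l$ via the L\'evy--It\^o decomposition, proves each term is a.s.\ in $L^1(D)$ (using the torsion function $v$ from \eqref{torsion} to control the drift and large-jump terms pathwise, and It\^o isometry together with \eqref{con-int} for the Gaussian and small-jump terms), and then applies deterministic Fubini, the $L^2$ Fubini theorem, and the stochastic Fubini theorem for symmetric pure-jump L\'evy noises (Theorem 5.1 in \cite{DH}). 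If you want to avoid citing that theorem you must reprove it; your Riemann-sum scheme does not do so, it presupposes its conclusion.
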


\begin{proof}
   For \(d=1\), \(G_{\Delta}\) is bounded on \(D \times D\). Hence, condition \eqref{cond-i} is clearly satisfied. For $d \ge 2$: Set
    \begin{equation}
        \label{G1}
       A = \{ y \in D \setminus \{x \} : G_{\cL}(x,y) >1 \},
    \end{equation}
    and $A^c$ is the complement of $A$ with respect to $D \setminus \{x \}$.
    Note that
    \begin{equation}
    \label{iin1}
        \begin{split}
            \int_{D \times \bR } (|z G_{\cL}(x,y) |^2 \wedge 1 ) dy \nu (dz) & = \int_{ A^c \times \bR } (|z G_{\cL}(x,y) |^2 \wedge 1 ) dy \nu (dz) \\
            & + \int_{ A \times \bR } (|z G_{\cL}(x,y) |^2 \wedge 1 ) dy \nu (dz).
        \end{split}
     \end{equation}
    Note that the first integral on the right hand side of \eqref{iin1} is clearly finite, by the monotonicity of $x \in \bR_+ \to (1 \wedge |x|^2 )$.  On the other hand, note that  then
    \[
       \int_{  A \times \bR } (|z G_{\cL}(x,y) |^2 \wedge 1 ) dy \nu (dz) \le \int_{ A \times \bR } (|z G_{\cL}(x,y) |^p \wedge 1 ) dy \nu (dz).
    \]
    Moreover, since $( (c |z|^p )\wedge 1) \le c (|z|^p \wedge 1)$ for all $c \ge 1$, and $|G_{\cL}(x,y)|^p > 1$ for all $y \in A$,  we get that:
    \[
      \int_{ A \times \bR } (|z G_{\cL}(x,y) |^p \wedge 1 ) dy \nu (dz) \le  \int_{  A \times \bR } |G_{\cL}(x,y) |^p (|z |^p \wedge 1 ) dy \nu (dz).
    \]
    Note that 
    \[
     \int_{  A \times \bR } |G_{\cL}(x,y) |^p (|z |^p \wedge 1 ) dy \nu (dz) < \infty
    \]
    as long as there exists $p \in (0,2]$ such that
    \begin{equation}
      \label{cond-i-G}
      \int_{D }  |G_{\cL}(x,y) |^p dy < \infty \quad \text{and} \quad \int_{|z| \le 1} |z|^p \nu(dz) < \infty.
    \end{equation}
    These conditions are satisfied for $d =2,3$ if $p =2$, and for $d \ge 4$, we require
    \[
       0 < p < \frac{d}{d-2}.
    \]

    Now, we will show that $u = u_{\text{gen}}$. Observe that $G_{\cL} (x, \cdot)$ is $\dot{\xi}$-integrable, it satisfies the {\em Poisson $\dot{\xi}$-integrable}, i.e.
    \begin{equation}
        \label{poisson}
        \begin{aligned}
        \langle \dot{\xi},G_{\cL}(x,\cdot)\rangle
        &= \int_D b\,G_{\cL}(x,y)\,dy
        + \int_D \sigma\,G_{\cL}(x,y)\,W_\xi(dy)
        + \int_{D_s (x)} G_{\cL}(x,y)\,z\,\widetilde J_{\xi}(dy,dz)\\
        &\quad{}+ \int_{D_l (x)} G_{\cL}(x,y)\,z\,J_{\xi}(dy,dz)
        =: I_b(x)+I_W(x)+I_{s}(x)+I_{l}(x),
        \end{aligned}
    \end{equation}
  where $$D_s (x) = \{ (y,z) \in D \times \bR_0 ; \, |G_{\cL}(x,y) z | \le 1 \}$$ and $$D_l (x) = \{ (y,z) \in D \times \bR_0 ; \, |G_{\cL}(x,y) z | > 1 \}.$$
  
Now we show that for any \(\phi\in\mathcal{D}(D)\),
\[
\int_D \bigl\lvert \langle \dot{\xi},\,G_{\cL}(x,\cdot)\rangle\bigr\rvert\;\mu_{\phi}(dx)
<\infty
\quad\text{almost surely}.
\]
By Hölder’s inequality, this will hold as soon as 
\[
u(\omega,\cdot)\;=\;\langle \dot{\xi},\,G_{\cL}(\cdot,\cdot)\rangle
\;\in\;L^1(D)
\quad\text{for almost every }\omega\in\Omega.
\]
By Theorem 8.30 in \cite{GT2015}, {\em the torsion problem}
\begin{equation}
\label{torsion}
\begin{cases}
 -\cL v(x) = 1, & x\in D,\\
 v(x) = 0,       & x\in \partial D,
\end{cases}
\end{equation}
admits a unique generalized solution \(v\in C^0(\overline{D})\). Hence, the drift term 
$I_b(x) = b\,v(x)$ is well defined and continuous on \(D\). 

For the term \(I_W(x)\) when \(d\le3\), By It\^o’s isometry and Assumption \ref{ass-g}, 
\begin{equation}
\label{W-p}
\mathbb{E}\bigl[|I_W(x)|^2\bigr]
=\int_D G^2(x,y)\,dy
\le C_{D,d},
\end{equation}
where \(C_{D,d}>0\) is a constant independent of \(x\), which implies that $I_W (\omega, \cdot) \in L^1( D)$ for almost every $\omega \in \Omega$.

For $I_s (x)$, notice that
\[
\begin{split}
    I_s (x)  & = \int_{D_s (x)} G_{\cL}(x,y)\,z 1_{ \{ |z| \leq 1 \} }\,\widetilde J_{\xi}(dy,dz) \\
    & + \int_{D_s (x)} G_{\cL}(x,y)\,z 1_{ \{ |z| > 1 \} }\,\widetilde J_{\xi}(dy,dz). 
\end{split}
\]
By the symmetry of $\dot{\xi}$, and
\[
 \int_{D_s (x)} G_{\cL}(x,y)\, |z | 1_{ \{ |z| > 1 \} }\,  dy \nu (dz) \leq \mu \{(y,z) \in D \times \{|z| >1\} \},
\]
it follows that
\[
 \int_{D_s (x)} G_{\cL}(x,y)\,z 1_{ \{ |z| > 1 \} }\,\widetilde J_{\xi}(dy,dz) =  \int_{D_s (x)} G_{\cL}(x,y)\,z 1_{ \{ |z| > 1 \} }\, J_{\xi}(dy,dz).
\]
Then, by linearity of the Lebesgue integral,
\[
\begin{split}
    \int_D  \int_{D_s (x)} G_{\cL}(x,y)\, |z| 1_{ \{ |z| > 1 \} }\, J_{\xi}(dy,dz)dx & =\int_{D \times \bR_0} \Big( \int_D 1_{D_s (x)}(y,z) G_{\cL}(x,y) dx \Big)\, |z| 1_{ \{ |z| > 1 \} }\, J_{\xi}(dy,dz) \\
   & \le  \int_{D \times \bR_0} v (y)\, |z| 1_{ \{ |z| > 1 \} }\, J_{\xi}(dy,dz) < \infty \, \, \mbox{a.s.}
\end{split}
\]
Now, by It\^o isometry and \eqref{con-int}, we obtain that
\[
\begin{split}
    \bE \Bigg[ \Bigg| \int_{D_s (x)} G_{\cL}(x,y)\,z 1_{ \{ |z| \leq 1 \} }\,\widetilde J_{\xi}(dy,dz) \Bigg|^2 \Bigg] & \leq \int_{D \times \bR_0} |G_{\cL}(x,y)|^p |z|^p 1_{\{|z| \le 1 \} } dy \nu (dz) \\
    & \le C_{D,d,p} \int_{\{|z| \le 1\}} |z|^p \nu (dz),
\end{split}
\]
where $C_{D,d,p}$ is a constant independent of $x$. Hence, $I_s(\omega, \cdot) \in L^1 (D)$. For $I_l(x)$, note that
\[
\begin{split}
    \bE \left[ \Bigg| \int_{D_l (x)} G_{\cL}(x,y)\, z 1_{\{ |z| \le 1 \}} \,J_{\xi}(dy,dz) \Bigg|^p \right] & \leq\int_{D_l (x)} G_{\cL}^p (x,y)\, |z|^p 1_{\{ |z| \le 1 \}} dy \nu (dz) \\
    & \leq  C_{D,d,p} \int_{\{|z| \le 1\}} |z|^p \nu (dz).
\end{split}
\]
By the linearity of the Lebesgue integral, we get that
\[
\begin{split}
    \int_D \int_{D_l (x)} G_{\cL}(x,y)\, |z| 1_{\{ |z| > 1 \}} \,J_{\xi}(dy,dz) dx & \int_{D \times \bR_0} \Big( \int_D 1_{D_l (x)} (y,z) G_{\cL}(x,y) dx \Big) |z| 1_{\{ |z| > 1 \}} \,J_{\xi}(dy,dz)  \\
    & \le \int_{ D \times \bR_0} v(y)  |z| 1_{\{ |z| > 1 \}} J_{\xi}(dy,dz) < \infty. \\
\end{split}
\]
Consider the pure‐jump part of \(\dot\xi\) defined by 
\[Z(A)=\int_{D\times\{|z|\le1\}}1_A(y)\,z\,\widetilde J_{\xi}(dy,dz)
+\int_{D\times\{|z|>1\}}1_A(y)\,z\,J_{\xi}(dy,dz).\]
Then, 
\(u(x)=I_b(x)+I_W(x)+\langle\dot Z,\,G_{\cL}(x,\cdot)\rangle,\)
with 
\(\langle\dot Z,\,G_{\cL}(x,\cdot)\rangle=I_s(x)+I_l(x).\)
We apply Fubini’s theorem to \(I_b\), the standard Fubini theorem for \(L^2\)-random measures to \(I_W\) when $d \le 3$, and for the  term \(\langle\dot Z,\,G_{\cL}(x,\cdot)\rangle\), we apply the stochastic Fubini theorem for symmetric L\'evy noises given by Theorem 5.1 in \cite{DH}. Hence, the mild solution \(u\) coincides with the generalized solution \(u_{\mathrm{gen}}\).

\end{proof}

\subsection{Examples}
We will present the main elliptic operators to which Theorem \ref{existence-1} applies. Additionally, we extend the results of the previous section to a class of spectral operators.

The study of Green’s functions associated with elliptic operators dates back to early investigations of boundary value problems in the late eighteenth and nineteenth centuries, where they first appeared as integral kernels for the Laplace operator. Since then, Green’s functions have become indispensable in the analysis of both second-order and higher-order elliptic PDEs, and remain an active area of research up to the present time. A fundamental question in the theory of Green’s functions for elliptic operators, which continues to be actively studied, is to identify structural conditions on \(\mathcal{L}\) under which there exists a constant \(C>0\) such that
\[
G_{\mathcal{L}}(x,y)\;\le\;C\,G_{\Delta}(x,y),
\quad
x,y\in D,\;x\neq y.
\]

One of the earliest works to address this problem is \cite{littman}, later extended to bounded domains in \cite{GW1982}, which also provides our first example.

\begin{example}[\textit{Divergence operators}]
For $d \ge 3$, let $\cL$ be a divergence elliptic operator of the form
\begin{equation}
\label{div0}
\cL \equiv \textup{div}( K(x) \nabla ),
\end{equation}
where $K = ( K_{ij}(x) )$  is a symmetric, positive‐definite coefficient matrix $d \times d$. Hence, by Theorem 1.1 in \cite{GW1982}, \(G_{\cL}\) satisfies \eqref{d3g}, which implies that there exists a unique mild solution to \eqref{SPDE1} for a symmetric Lévy noise $\dot{\xi}$ with triplet \((b,0,\nu)\), and that \eqref{con-int} holds. For \(d=3\), we may take \(\dot{\xi}\) with triplet \((b,\sigma,\nu)\) without imposing any assumptions on the Lévy measure \(\nu\).

\end{example}

When \(\cL\) does not have the divergence form \eqref{div0}, it is well known that the Green function \(G_{\cL}\) does not satisfy the upper bound estimate \eqref{d3g}, even if \(A(x)\) is uniformly continuous, bounded, and elliptic (see, for instance, \cite{bauman84,GS1954}). Establishing when an estimate like \eqref{d3g} holds in the non-divergence case is a nontrivial problem. Recently, in \cite{HWKI} it was shown that if \(A(x)\) satisfies the following conditions, then \(G_{\cL}\) satisfies \eqref{d3g}. Define
\[
\bar{\textbf{A}}_{\tilde{D}_r(x)} \;:=\; \frac{1}{|{\tilde{D}_r} (x)|} \int_{{\tilde{D}_r} (x)} \textbf{A}(\xi)\,d\xi,
\quad
\chi_{\textbf{A}}(x,r) \;:=\; \int_{\tilde{D}_r (x)} \bigl\|\textbf{A}(y) - \bar{\textbf{A}}_{\tilde{D}_r (x)} (y) \bigr\|\,dy,
\]
where $\tilde{D}_r (x) = D \cap B_r (x)$. Additionally, set \(\chi_\textbf{A}(r) := \sup_{x \in \overline{D}} \chi_\textbf{A}(x,r)\). We say that \(\textbf{A}\) satisfies the \emph{Dini mean oscillation} condition on \(D\) if
\[
\int_0^1 \frac{\chi_\textbf{A}(t)}{t}\,dt < \infty.
\]

\begin{example}[\textit{Non-divergence operators}]
\label{non-div-ex}
For $d \ge 3$, let $\cL$ be a non-divergence operator of the form
\begin{equation}
    \label{non-div}
    \cL = \sum_{i, j =1}^d a_{i,j}(x) \partial_{x_i x_j} ,
\end{equation}
where $\textbf{A}(x) = ( a_{ij} (x) )$ satisfies the Dini mean oscillation condition on $D$. Hence, by Theorem 1.7 in \cite{HWKI}, we have that $G_{\cL}$ satisfies \eqref{d3g}. Therefore,there exists a unique mild solution to \eqref{SPDE1} for a symmetric Lévy noise with triplet \((b,0,\nu)\), and that \eqref{con-int} holds. For \(d=3\), we may take \(\dot{\xi}\) with triplet \((b,\sigma,\nu)\) without imposing any assumptions on the Lévy measure \(\nu\).  
\end{example}

Although the following operator is not elliptic in the strict sense, the existence of a mild solution to \eqref{SPDE1} follows from Theorem~\ref{existence-1}, which justifies presenting this problem in this section.

\title{\textit{Spectral fractional Laplace operator:}} We define the spectral power of \((-\Delta)\) of order \(\gamma>0\) by
\[
(-\Delta)^\gamma h \;=\; \sum_{k=1}^\infty \lambda_k^\gamma\,\cF_k[h]\,e_k.
\]
It can be shown that 
\[
(-\Delta)^\gamma: \bigcup_{r\in\mathbb{R}} H_r(D) \;\to\; \bigcup_{r\in\mathbb{R}} H_r(D).
\]
When \(\gamma=1\), this operator coincides with \(-\Delta\).  In what follows, we study \eqref{SPDE1} with \(\cL\) replaced by \((-\Delta)^\gamma\), i.e.
\begin{equation}
\label{SPDE-s}
\begin{cases}
(-\Delta)^\gamma u = \dot{\xi} \,& \text{in } D,\\
u = 0 \,& \text{on } \partial D.
\end{cases}
\end{equation}
Recall that we are interested in studying a mild solution \(u\) of \eqref{SPDE-s}, i.e.
\[
u(x) \;=\; \int_D G_{\gamma}(x,y)\,\xi(dy),
\]
where \(G_{\gamma}\) is the Green’s function of \((-\Delta)^\gamma\), given by the spectral representation:
\[
G_{\gamma}(x,y) \;=\; \sum_{k=1}^\infty \frac{e_k(x)\,e_k(y)}{\lambda_k^\gamma}.
\]
\begin{theorem}
\label{existence-s}
For \(\gamma > \frac{d}{4}\), equation \eqref{SPDE-s} admits a unique mild solution for any symmetric Lévy noise \(\dot{\xi}\) with characteristic triplet \((b,\sigma,\nu)\). Moreover, \(u\) is also a generalized solution to \eqref{SPDE1} in the sense that it satisfies \eqref{weak-s}.
\end{theorem}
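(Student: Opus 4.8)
The plan is to establish $\dot{\xi}$-integrability of the kernel $G_{\gamma}(x,\cdot)$ by verifying the three conditions in \eqref{cond-i}, all of which reduce to a single fact: that $G_{\gamma}(x,\cdot)$ lies in $L^2(D)$. The spectral representation makes this transparent. Since the Fourier coefficients of $y\mapsto G_{\gamma}(x,y)$ are $\cF_k[G_{\gamma}(x,\cdot)]=\lambda_k^{-\gamma}e_k(x)$, Parseval's identity gives $\int_D G_{\gamma}(x,y)^2\,dy=\sum_{k=1}^\infty \lambda_k^{-2\gamma}e_k(x)^2$, and integrating once more in $x$ (using $\|e_k\|_{L^2(D)}=1$) yields
\begin{equation}
\label{parseval-gamma}
\int_D\int_D G_{\gamma}(x,y)^2\,dx\,dy=\sum_{k=1}^\infty \lambda_k^{-2\gamma}.
\end{equation}
By Weyl's asymptotics $\lambda_k\asymp k^{2/d}$, the right-hand side of \eqref{parseval-gamma} converges precisely when $2\gamma\cdot\tfrac{2}{d}>1$, i.e.\ when $\gamma>\tfrac{d}{4}$, which is exactly the hypothesis. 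Tonelli's theorem then gives $G_{\gamma}(x,\cdot)\in L^2(D)$ for almost every $x\in D$.

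With $G_{\gamma}(x,\cdot)\in L^2(D)$ in hand, I would check \eqref{cond-i} directly. The first condition holds because $L^2(D)\subset L^1(D)$ on the bounded domain $D$. The second is immediate since $\int_D|\sigma G_{\gamma}(x,y)|^2\,dy=\sigma^2\|G_{\gamma}(x,\cdot)\|_{L^2(D)}^2<\infty$. For the jump condition I would split $\mathbb{R}_0$ into $\{|z|>1\}$ and $\{|z|\le1\}$: on the former $(|zG_{\gamma}(x,y)|^2\wedge1)\le1$ integrates to $|D|\,\nu(\{|z|>1\})<\infty$, while on the latter $(|zG_{\gamma}(x,y)|^2\wedge1)\le|z|^2G_{\gamma}(x,y)^2$ integrates to $\big(\int_{|z|\le1}|z|^2\nu(dz)\big)\,\|G_{\gamma}(x,\cdot)\|_{L^2(D)}^2<\infty$. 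Both factors are finite because $\nu$ is a Lévy measure. Crucially, unlike Theorem \ref{existence-1}, no supplementary integrability assumption on $\nu$ is required: the gain comes from $\gamma>\tfrac d4$ placing the kernel in $L^2$ rather than merely in $L^p$ for $p<\tfrac{d}{d-2}$. This establishes that $u(x)=\langle\dot{\xi},G_{\gamma}(x,\cdot)\rangle$ is well defined, and uniqueness is immediate from the explicit representation, the stochastic integral being determined a.s.\ by its integrand.

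To identify $u$ with the generalized solution I would mirror the Fubini argument in the proof of Theorem \ref{existence-1}. Using the symmetry $G_{\gamma}(x,y)=G_{\gamma}(y,x)$ evident from the spectral sum, the target identity \eqref{weak-s} amounts to interchanging $\int_D(\cdot)\varphi(x)\,dx$ with the stochastic integral $\langle\dot{\xi},\cdot\rangle$, since $\int_D G_{\gamma}(x,\cdot)\varphi(x)\,dx=(G_{\gamma}\circledast\varphi)(\cdot)$. Decomposing $u=I_b+I_W+\langle\dot Z,G_{\gamma}(x,\cdot)\rangle$ as in \eqref{poisson}, I would justify the interchange termwise: classical Fubini for the drift $I_b$, the $L^2$-stochastic Fubini for $I_W$, and the stochastic Fubini for symmetric Lévy noise (Theorem 5.1 in \cite{DH}) for the pure-jump part. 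The required integrability inputs are furnished by \eqref{parseval-gamma} and the $L^2$-bounds above, together with $\int_D\|G_{\gamma}(x,\cdot)\|_{L^2(D)}^2|\varphi(x)|\,dx<\infty$.

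I expect the main obstacle to be two-fold. First, \eqref{parseval-gamma} only gives $G_{\gamma}(x,\cdot)\in L^2(D)$ for almost every $x$, so the mild solution is a priori defined only off a Lebesgue-null set; upgrading to a genuinely everywhere-defined field would require pointwise eigenfunction bounds such as $\|e_k\|_\infty\lesssim\lambda_k^{(d-1)/4}$, which only yield summability of $\sum_k\lambda_k^{-2\gamma}e_k(x)^2$ uniformly in $x$ under the stronger restriction $\gamma>\tfrac d2-\tfrac14$; I would therefore phrase existence in the almost-everywhere sense consistent with the $\cO$-measurable random-field framework. Second, verifying the precise hypotheses of the jump stochastic Fubini theorem (Theorem 5.1 in \cite{DH}) in the present bounded-domain setting is the most delicate point, and I would check them against the integrability estimates derived above.
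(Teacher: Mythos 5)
Your reduction to $G_\gamma(x,\cdot)\in L^2(D)$, the verification of \eqref{cond-i} from that fact, and the Fubini argument identifying $u$ with $u_{\mathrm{gen}}$ all match the paper's strategy, which likewise runs the proof of Theorem \ref{existence-1} with condition \eqref{cond-i-G} at $p=2$. The genuine gap is in how you obtain the $L^2$ bound itself. Your Parseval--Tonelli argument, $\int_D\int_D G_\gamma^2\,dx\,dy=\sum_k\lambda_k^{-2\gamma}<\infty$ plus Weyl's law, only gives $G_\gamma(x,\cdot)\in L^2(D)$ for \emph{almost every} $x$, so your mild solution is defined only off a Lebesgue-null set, whereas the theorem (and the definition \eqref{mild-spde}) requires $u(x)=\langle\dot\xi,G_\gamma(x,\cdot)\rangle$ to make sense at every $x\in D$. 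Moreover, your assessment of what it costs to fix this is wrong: you claim that a pointwise statement forces the crude eigenfunction bound $\|e_k\|_\infty\lesssim\lambda_k^{(d-1)/4}$ and hence the stronger restriction $\gamma>\tfrac d2-\tfrac14$. The missing idea — the one the paper uses — is the \emph{local Weyl law} for the spectral function: since $\partial D$ is smooth, Theorem 8.2 of \cite{agmon65} gives
\[
V(t,x):=\sum_{\lambda_k\le t}|e_k(x)|^2\;\le\;C\,t^{d/2}\qquad\text{for all }x\in D,
\]
with $C$ independent of $x$. Then
\[
\int_D G_\gamma(x,y)^2\,dy=\sum_{k\ge1}\lambda_k^{-2\gamma}|e_k(x)|^2=\int_{\lambda_1}^\infty t^{-2\gamma}\,V(dt,x)\;\le\;C\int_{\lambda_1}^\infty t^{-2\gamma-1+\frac d2}\,dt<\infty
\]
by Abel summation, exactly when $\gamma>\tfrac d4$, \emph{uniformly in} $x$. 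The point is that bounding each $|e_k(x)|$ separately by its sup norm throws away the averaging inside the partial sums $\sum_{\lambda_k\le t}|e_k(x)|^2$; Agmon's estimate retains it and recovers the sharp exponent $d/4$ pointwise, not merely in the $x$-average.

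With that single replacement your proof goes through at every $x\in D$ and yields the theorem as stated; without it, you have proved a strictly weaker statement. Retreating to an ``a.e.\ $x$'' formulation is not a harmless rephrasing here: the mild solution is by definition a process indexed by all of $D$, and the subsequent regularity analysis (Theorem \ref{path}, where the field is expanded in Fourier coefficients and then shown to have a continuous modification on $\overline D$ when $\gamma>\tfrac d2$) is built on the solution constructed pointwise in Theorem \ref{existence-s}. The rest of your argument — the splitting of the jump integral over $\{|z|\le1\}$ and $\{|z|>1\}$, the observation that no extra integrability of $\nu$ is needed once the kernel is in $L^2$, and the termwise stochastic Fubini for drift, Gaussian, and pure-jump parts — is correct and is essentially what the paper does.
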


\begin{proof}
We follow the same arguments as in the proof of Theorem~\ref{existence-1}, in particular using condition \eqref{cond-i-G} with \(p=2\).  It therefore suffices to show that
\begin{equation}
\label{s-2}
\int_D G_\gamma^2(x,y) \,dy < \infty.
\end{equation}
By orthogonality,
\[
\int_D G_\gamma(x,y)^2\,dy
= \sum_{k=1}^\infty \frac{|e_k(x)|^2}{\lambda_k^{2\gamma}}
= \int_{\lambda_1}^\infty t^{-2\gamma}\,V(dt,x),
\]
where
\[
V(t,x) := \sum_{\lambda_k \le t} |e_k(x)|^2.
\]
Since \(\partial D\) is smooth, Theorem 8.2 of \cite{agmon65} gives
\[
V(t,x)\;\le\;C\,t^{\frac d2},
\quad\forall\,x\in D.
\]
Hence, by Abel’s summation formula (see Theorem 4.2 in \cite{apostol76}) and \(\gamma>\frac{d}{4 }\),
\[
\int_{\lambda_1}^\infty t^{-2\gamma}\,V(dt,x)
\;\le\;C\int_{\lambda_1}^\infty u^{-2\gamma-1+\frac d2}\,du
\;<\;\infty.
\]
\end{proof}

\section{Path-wise regularity}

In this section, we prove the pathwise regularity properties of the solution given in Theorem~\ref{existence-1}. Since the Gaussian case has been widely studied, we will omit the continuous and drift components of \(\dot{\xi}\), i.e.\ set \(b = \sigma = 0\).

\begin{theorem}
\label{path}
Under the assumptions of Theorem \ref{existence-s}, the solution \(u\) of \eqref{SPDE-s} satisfies
\[
u(\omega,\cdot)\in H_r(D)\quad\text{for almost all }\omega\in\Omega
\]
for every \(r<2\gamma-\tfrac{d}{2}\). Moreover, if $\gamma \in (\frac{d}{2}, \infty)$, then $u$ admits a continuous modification in $\overline{D}$.
\end{theorem}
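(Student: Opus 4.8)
The plan is to establish the two claims separately, both resting on the spectral representation of the solution and the eigenvalue asymptotics already used in the proof of Theorem~\ref{existence-s}. For the Sobolev regularity, I would first write the mild solution in its spectral form. Since $u(x)=\langle\dot\xi,G_\gamma(x,\cdot)\rangle$ and $G_\gamma(x,y)=\sum_k \lambda_k^{-\gamma}e_k(x)e_k(y)$, the natural candidate for the Fourier coefficient is $a_k(u)=\lambda_k^{-\gamma}\langle\dot\xi,e_k\rangle$, obtained by integrating the kernel against $\dot\xi$ termwise (each $e_k$ is smooth, hence $\dot\xi$-integrable by \eqref{cond-i}). Then, by the definition of the $H_r(D)$-norm in \eqref{sobolev-1}, I must show
\[
\|u(\omega,\cdot)\|_{H_r(D)}^2=\sum_{k=1}^\infty \lambda_k^r\,a_k(u)^2=\sum_{k=1}^\infty \lambda_k^{r-2\gamma}\,\langle\dot\xi,e_k\rangle^2<\infty
\]
for almost every $\omega$. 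The cleanest route is to bound the expectation $\bE\bigl[\|u\|_{H_r(D)}^2\bigr]=\sum_k \lambda_k^{r-2\gamma}\,\bE\bigl[\langle\dot\xi,e_k\rangle^2\bigr]$; if this is finite then the norm is almost surely finite. Since $b=\sigma=0$ and $\langle\dot\xi,e_k\rangle$ is a pure-jump symmetric integral, under the finite-variance assumption \eqref{finite-var} one has $\bE[\langle\dot\xi,e_k\rangle^2]=\bigl(\int_{\bR_0}z^2\,\nu(dz)\bigr)\int_D e_k(x)^2\,dx=C_\nu$, a constant independent of $k$ by the $L^2$-normalization of the $e_k$. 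Thus the sum reduces to $C_\nu\sum_k \lambda_k^{r-2\gamma}$, which converges precisely when $r-2\gamma<-\tfrac d2$, i.e.\ $r<2\gamma-\tfrac d2$, using Weyl's law $\lambda_k\sim c\,k^{2/d}$ (equivalently the counting estimate $V(t,x)\le C t^{d/2}$ from Theorem 8.2 of \cite{agmon65} integrated over $D$).

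The main obstacle is that Theorem~\ref{path} invokes only the assumptions of Theorem~\ref{existence-s}, which does not impose \eqref{finite-var}; a general symmetric L\'evy noise need not have a second moment (the $\alpha$-stable case being the motivating example). Hence the expectation argument above is not directly available, and I would instead need a pathwise or in-probability estimate. The natural substitute is to control a fractional moment: by the symmetry of $\dot\xi$ and the L\'evy--It\^o decomposition \eqref{levy-ito}, each $\langle\dot\xi,e_k\rangle$ is an infinitely divisible symmetric random variable, and for $0<q<p\le 2$ (with $p$ as in the integrability hypothesis) one can bound $\bE\bigl[|\langle\dot\xi,e_k\rangle|^q\bigr]$ by a constant times $\bigl(\int_D|e_k(x)|^p\,dx\bigr)^{q/p}$ via the standard moment inequalities for stochastic integrals against a compensated Poisson measure. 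Here I would use the Rosenthal-type or Bichteler--Jacod bound together with $\int_{|z|\le1}|z|^p\,\nu(dz)<\infty$, and estimate $\int_D|e_k|^p\,dx$ uniformly in $k$ via an $L^p$-bound on the Dirichlet eigenfunctions. Summing $\bE\bigl[\bigl(\lambda_k^{r-2\gamma}\langle\dot\xi,e_k\rangle^2\bigr)^{q/2}\bigr]$ and applying the elementary inequality $\bigl(\sum a_k\bigr)^{q/2}\le\sum a_k^{q/2}$ for $q/2\le1$ converts the finiteness of $\bE\bigl[\|u\|_{H_r}^q\bigr]$ into the convergence of $\sum_k \lambda_k^{(r-2\gamma)q/2}$, which again holds under the stated range of $r$ after choosing $q$ close enough to $p$. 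Controlling the uniform $L^p$-norm of the eigenfunctions and tracking the precise exponent so that the threshold remains $r<2\gamma-\tfrac d2$ is the delicate point.

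For the continuity claim, the argument is short once the Sobolev regularity is in hand. If $\gamma>\tfrac d2$, then the open interval $(-\infty,\,2\gamma-\tfrac d2)$ contains values $r$ exceeding $\tfrac d2$; indeed $2\gamma-\tfrac d2>\tfrac d2$ is equivalent to $\gamma>\tfrac d2$, so there exists $r$ with $\tfrac d2<r<2\gamma-\tfrac d2$. Fixing such an $r$, the first part of the theorem gives $u(\omega,\cdot)\in H_r(D)$ for almost every $\omega$, and Theorem~\ref{cont-embdd} yields the continuous embedding $H_r(D)\subset C^0(\overline D)$. Therefore $u(\omega,\cdot)$ agrees almost everywhere with a function in $C^0(\overline D)$, and selecting that continuous representative for each such $\omega$ furnishes the desired continuous modification on $\overline D$. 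I would close by noting that the exceptional null set may be taken independent of $x$ since the $H_r$-membership is a single almost-sure event.
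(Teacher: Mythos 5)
Your proposal for the first part contains a genuine gap, and it is exactly at the point you flag as ``the delicate point.'' Theorem~\ref{existence-s} assumes \emph{nothing} about \(\nu\) beyond its being a symmetric L\'evy measure: there is no hypothesis of type \eqref{con-int} here (that condition belongs to Theorem~\ref{existence-1} for \(d\ge4\)), so the phrase ``with \(p\) as in the integrability hypothesis'' refers to an assumption that is not available. Worse, the fractional-moment route cannot be repaired by choosing \(q\) small: the random variable \(\langle\dot\xi,e_k\rangle\) contains the large-jump integral \(\int_{D\times\{|z|>1\}}e_k(y)\,z\,J_\xi(dy,dz)\), and for a L\'evy measure such as \(\nu(dz)=|z|^{-1}(\log|z|)^{-2}\mathbf{1}_{\{|z|>e\}}\,dz\) (which is admissible, since \(\nu(\{|z|>1\})<\infty\)) one has \(\int_{|z|>1}|z|^q\,\nu(dz)=\infty\) for every \(q>0\), so \(\mathbb{E}\bigl[|\langle\dot\xi,e_k\rangle|^q\bigr]=\infty\) for all \(q>0\). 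No Rosenthal/Bichteler--Jacod bound can produce a finite right-hand side in that situation; any argument that estimates moments of the full coefficient \(\langle\dot\xi,e_k\rangle\) is doomed under the stated hypotheses (the paper emphasizes in the introduction that the noise is not required to have moments of any order).

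The repair, which is the paper's proof, is to split each coefficient via the L\'evy--It\^o decomposition and treat the two pieces by \emph{different} methods. The compensated small-jump part always has finite variance, because \(\int_{|z|\le1}|z|^2\,\nu(dz)<\infty\) holds for \emph{every} L\'evy measure; hence your ``cleanest route'' (It\^o isometry, \(\|e_k\|_{L^2(D)}=1\), then \(\sum_k\lambda_k^{r-2\gamma}<\infty\) by Weyl's law for \(r<2\gamma-\tfrac d2\)) works verbatim on this piece, with no need for \eqref{finite-var} or fractional moments. The large-jump part is then handled \emph{pathwise}, not in expectation: almost surely \(J_\xi(D\times\{|z|>1\})<\infty\), so this piece is a finite sum over the atoms of \(J_\xi\); applying Cauchy--Schwarz and the uniform kernel bound \(\sup_{y\in D}\sum_k\lambda_k^{r-2\gamma}|e_k(y)|^2<\infty\) for \(r<2\gamma-\tfrac d2\) (Lemma A.1 in \cite{CD23}) gives an almost surely finite bound. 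One more point you gloss over: identifying \(\mathcal{F}_k[u]\) with \(\lambda_k^{-\gamma}\langle\dot\xi,e_k\rangle\) is not just ``termwise integration''; it requires exchanging the \(dx\)-integral defining the Fourier coefficient with the stochastic integral, which the paper justifies by the stochastic Fubini theorem for symmetric L\'evy noises (Theorem 5.1 in \cite{DH}), and one must intersect the countably many null sets before summing over \(k\). Your second paragraph on continuity (choosing \(\tfrac d2<r<2\gamma-\tfrac d2\) and invoking Theorem~\ref{cont-embdd}) is correct and matches the paper.
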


\begin{proof}
For \(\gamma > \frac{d}{4}\),  by Theorem 5.1 in \cite{DH}, for each $k \in \bN$,
\[
\cF_k [ \langle \dot{\xi}, G_\gamma (x, \cdot) \rangle] = \langle \dot{\xi}, \frac{e_k}{\lambda_k^\gamma} \rangle \quad \mbox{a.s.}
\]
Consequently, for each \(k\in\mathbb{N}\) there exists an event \(\Omega_k\subset\Omega\) with \(\mathbb{P}(\Omega_k)=1\) such that, for all \(\omega\in\Omega_k\),
\[
\mathcal{F}_k[u(\omega,\cdot)]
= \int_D \frac{e_k(y)}{\lambda_k^\gamma}\,\xi(\omega,dy)
= \int_{D\times\{|z|\le1\}}\frac{e_k(y)}{\lambda_k^\gamma}\,z\,\widetilde J_{\xi}(\omega,dy,dz)
+ \int_{D\times\{|z|>1\}}\frac{e_k(y)}{\lambda_k^\gamma}\,z\,J_{\xi}(\omega,dy,dz).
\]
Setting \(\displaystyle \Omega_\infty = \bigcap_{k=1}^\infty \Omega_k\), it follows that \(\mathcal{F}_k[u]\) is well-defined for all \(k\in\mathbb{N}\) on \(\Omega_\infty\). Hence, by Hölder’s inequality,
\[
\begin{split}
\| u \|_{H_r(D)}^2 
&\leq 2 \Biggl(
   \sum_{k\ge1} \lambda_k^r \Bigl|\!\int_{D\times\{|z|\le1\}}\frac{e_k(y)}{\lambda_k^\gamma}\,z\,\widetilde J_{\xi}(dy,dz)\Bigr|^2
   \;+\;
   \sum_{k\ge1} \lambda_k^r \Bigl|\!\int_{D\times\{|z|>1\}}\frac{e_k(y)}{\lambda_k^\gamma}\,z\,J_{\xi}(dy,dz)\Bigr|^2
\Biggr)\\
&\leq 2 \Biggl(
   \sum_{k\ge1} \lambda_k^{\,r-2\gamma} \Bigl|\!\int_{D\times\{|z|\le1\}} e_k(y)\,z\,\widetilde J_{\xi}(dy,dz)\Bigr|^2\\
&\qquad\quad
   +\;J_{\xi}(D\times\{|z|>1\})
        \int_{D\times\{|z|>1\}} 
          \sum_{k\ge1} \lambda_k^{\,r-2\gamma}\,\lvert e_k(y)\rvert^2\,\lvert z\rvert^2
        \;J_{\xi}(dy,dz)
\Biggr).
\end{split}
\]
Observe that
\[
\int_{D\times\{|z|>1\}} 
    \sum_{k=1}^\infty \lambda_k^{\,r-2\gamma}\,\lvert e_k(y)\rvert^2\,\lvert z\rvert^2
    \,J_{\xi}(dy,dz)
< \infty.
\]
By (A.3)-Lemma A.1 in \cite{CD23},
\[
\sup_{y\in D}\sum_{k=1}^\infty \lambda_k^{\,r-2\gamma}\,\lvert e_k(y)\rvert^2
< \infty
\quad\text{for }r<2\gamma-\tfrac d2.
\]

Similarly, for \(r<2\gamma-\tfrac d2\),
\[
\sum_{k=1}^\infty \lambda_k^{\,r-2\gamma}
\Bigl|\!\int_{D\times\{|z|\le1\}} e_k(y)\,z\,\widetilde J_{\xi}(dy,dz)\Bigr|^2
< \infty
\quad\text{a.s.}
\]
Indeed, by the It\^o isometry gives
\[
\mathbb{E}\Bigl[\Bigl|\!\int_{ D\times \{ |z|\le1 \}  } e_k(y)\,z\,\widetilde J_{\xi}(dy,dz)\Bigr|^2\Bigr]
= \|e_k\|_{L^2(D)}^2 \int_{\{ |z|\le1 \}}|z|^2\,\nu(dz),
\]
so that
\[
\sum_{k=1}^\infty \lambda_k^{\,r-2\gamma}\,
\mathbb{E}\Bigl[\Bigl|\!\int_{D\times\{|z|\le1\}} e_k(y)\,z\,\widetilde J_{\xi}(dy,dz)\Bigr|^2\Bigr]
= \Bigl(\int_{\{ |z|\le1 \}}|z|^2\,\nu(dz)\Bigr)
\sum_{k=1}^\infty \lambda_k^{\,r-2\gamma}
< \infty.
\]

Finally, we proved that \(u(\omega,\cdot)\in H_r(D)\) almost surely for every \(r<2\gamma-\tfrac{d}{2}\), by Theorem \ref{cont-embdd}, \(u(\omega,\cdot)\) is continuous on \(\overline{D}\) almost surely by choosing 
\[
\tfrac{d}{2}<r<2\gamma-\tfrac{d}{2}.
\]
Such an \(r\) exists precisely when \(\gamma > \tfrac{d}{2}\). Hence, the condition \(\gamma > \tfrac{d}{2}\) is necessary and sufficient for \(u\) to be continuous on \(\overline{D}\).

\end{proof}
The proofs of the following results adapt the argument of Theorem \ref{path}, using the same techniques in the eigenbasis of \(\cL\).

\begin{corollary}
Let \(\cL\) be an elliptic operator satisfying \eqref{d2g} or \eqref{d3g}. Then the solution \(u\) of \eqref{SPDE1} admits a version taking values in \(H_r(D)\) for every  $r \;<\; 2 - \frac{d}{2}.$
\end{corollary}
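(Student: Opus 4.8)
The plan is to run the proof of Theorem \ref{path} essentially verbatim with $\gamma=1$, but in an eigensystem adapted to $\cL$ rather than to $-\Delta$. Assume first that $\cL$ is self-adjoint (for instance of the divergence form \eqref{div0}), so that $-\cL$ possesses a complete $L^2(D)$-orthonormal eigenbasis $\{\psi_k\}_{k\ge1}$ with eigenvalues $0<\rho_1\le\rho_2\le\cdots\to\infty$. First I would establish the analogue of the Fourier-coefficient identity of Theorem \ref{path}: testing $u(x)=\langle\dot{\xi},G_{\cL}(x,\cdot)\rangle$ against $\psi_k$ and applying the stochastic Fubini theorem for symmetric Lévy noises (Theorem 5.1 in \cite{DH}) gives
\[
\langle u,\psi_k\rangle_{L^2(D)}=\big\langle\dot{\xi},\,h_k\big\rangle,\qquad h_k(y):=\int_D G_{\cL}(x,y)\,\psi_k(x)\,dx .
\]
Since $\cL$ is self-adjoint, $h_k=(-\cL)^{-1}\psi_k=\rho_k^{-1}\psi_k$, whence $\langle u,\psi_k\rangle_{L^2(D)}=\rho_k^{-1}\langle\dot{\xi},\psi_k\rangle$ almost surely on a common full-measure event, exactly mirroring the identity $\cF_k[u]=\lambda_k^{-\gamma}\langle\dot{\xi},e_k\rangle$ of Theorem \ref{path} at $\gamma=1$. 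I emphasize that this step uses only the Green-operator identity and self-adjointness, not the $L^2$-convergent spectral representation of $G_{\cL}$ (which fails for $d\ge4$).

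Next I would estimate the $\cL$-adapted Sobolev norm $\sum_k\rho_k^{\,r}|\langle u,\psi_k\rangle|^2=\sum_k\rho_k^{\,r-2}|\langle\dot{\xi},\psi_k\rangle|^2$, which is the computation of Theorem \ref{path} with $2\gamma$ replaced by $2$. Setting $b=\sigma=0$ and splitting $\dot{\xi}$ into its compensated small-jump part and its large-jump part as there, the small-jump contribution is handled by the It\^o isometry, producing $\big(\int_{\{|z|\le1\}}|z|^2\,\nu(dz)\big)\sum_k\rho_k^{\,r-2}$; the integral is finite because $\nu$ is a Lévy measure, and Weyl's asymptotics $\rho_k\asymp k^{2/d}$ give $\sum_k\rho_k^{\,r-2}<\infty$ precisely for $r<2-\tfrac d2$. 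The large-jump contribution is a finite sum over the atoms of $J_{\xi}$ on $D\times\{|z|>1\}$ and is dominated, via Cauchy--Schwarz, by the uniform bound
\[
\sup_{y\in D}\sum_{k\ge1}\rho_k^{\,r-2}\,|\psi_k(y)|^2<\infty\qquad(r<2-\tfrac d2).
\]
This last estimate I would obtain exactly as in the proof of Theorem \ref{existence-s}: the spectral function $V_{\cL}(t,x):=\sum_{\rho_k\le t}|\psi_k(x)|^2$ obeys $V_{\cL}(t,x)\le C\,t^{d/2}$ uniformly in $x$ by Theorem 8.2 of \cite{agmon65}, which applies to any self-adjoint elliptic operator with $C^\infty$ boundary, and Abel summation (Theorem 4.2 in \cite{apostol76}) converts this into convergence of the series for $r<2-\tfrac d2$.

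This yields $u(\omega,\cdot)$ in the Sobolev space $H_r^{\cL}(D)$ built from $\{(\rho_k,\psi_k)\}$ for every $r<2-\tfrac d2$, almost surely. To conclude in terms of $H_r(D)$ as defined through the Dirichlet Laplacian, I would identify the two scales: since the $H_r$ spaces are nested, with $H_{r'}(D)\supset H_r(D)$ for $r'<r$, it suffices to prove membership for $r$ arbitrarily close to $2-\tfrac d2$, and in the range $|r|\le2$ both scales coincide with the interpolation Sobolev space $H^r(D)$ carrying Dirichlet conditions (Lemma 2.18 in \cite{CDH19} together with the agreement of the domains of $(-\Delta)^{r/2}$ and $(-\cL)^{r/2}$ for $|r|\le2$). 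I expect the genuinely delicate points, absent from Theorem \ref{path}, to be twofold. First, this identification of the two Sobolev scales becomes problematic in very high dimension, where the near-critical exponent $2-\tfrac d2$ falls below $-2$ and the fractional-power domains of $-\cL$ and $-\Delta$ begin to differ through their higher-order boundary conditions. Second, and more seriously, the \emph{non-self-adjoint} case (such as the non-divergence operators of Example \ref{non-div-ex}) admits no orthonormal eigenbasis: there one must retain the Laplacian basis and work directly with $h_k=(-\cL^{\ast})^{-1}e_k$, for which the required uniform pointwise control $\sup_{y\in D}\sum_k\lambda_k^{\,r-2}|h_k(y)|^2<\infty$ no longer follows from a single spectral-function estimate and constitutes the main obstacle.
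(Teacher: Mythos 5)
Your proposal takes essentially the same route as the paper: the paper gives no detailed proof of this corollary, only the one-line remark that ``the proofs of the following results adapt the argument of Theorem \ref{path}, using the same techniques in the eigenbasis of \(\cL\).'' Your self-adjoint argument --- stochastic Fubini (Theorem 5.1 in \cite{DH}) to get \(\langle u,\psi_k\rangle_{L^2(D)}=\rho_k^{-1}\langle\dot{\xi},\psi_k\rangle\), It\^o isometry plus Weyl asymptotics \(\rho_k\asymp k^{2/d}\) for the compensated small jumps, and Cauchy--Schwarz plus a uniform spectral-function bound for the finitely many large jumps --- is precisely that adaptation, executed correctly. Two cosmetic remarks: the paper's Theorem \ref{path} cites Lemma A.1 of \cite{CD23} for the uniform bound \(\sup_{y}\sum_k \rho_k^{r-2}|\psi_k(y)|^2<\infty\) rather than Agmon, and for divergence-form operators with merely \(L^\infty\) coefficients Agmon's Theorem 8.2 (stated for smooth coefficients) is not directly applicable; the bound \(V_{\cL}(t,x)\le Ct^{d/2}\) is instead obtained from the on-diagonal heat-kernel estimate \(p_t(x,x)\le Ct^{-d/2}\), which Aronson's Gaussian bounds provide in that generality.

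The two obstacles you flag at the end are genuine, and --- importantly --- the paper's own argument does not resolve them either. First, the corollary's hypotheses (\eqref{d2g} or \eqref{d3g}) cover the non-divergence operators of Example \ref{non-div-ex}, which are not self-adjoint and admit no orthonormal eigenbasis, so the ``eigenbasis of \(\cL\)'' invoked by the paper simply does not exist in that generality; working in the Laplacian basis with \(h_k=(-\cL^{\ast})^{-1}e_k\), as you sketch, indeed leaves the required uniform control of \(\sum_k\lambda_k^{r-2}|h_k(y)|^2\) unproved. Second, since \(H_r(D)\) is defined in the paper through the Dirichlet Laplacian, membership in the \(\cL\)-adapted scale must still be transferred; your form-domain interpolation and duality argument settles this for \(|r|\le1\), which suffices for \(d\ge2\) (where \(2-\tfrac d2\le1\)), but for \(d=1\) the range \(1<r<\tfrac32\) is delicate when the coefficients are only \(L^\infty\), and your claim that the two scales coincide for \(|r|\le2\) is optimistic there. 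In short, your proposal is as complete as the paper's own proof in the self-adjoint case and is more honest about where the general statement remains unestablished.
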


\begin{proposition}
Let \(u\) be the solution of \eqref{SPDE1} with \(\cL=\Delta\). Then \(u\) admits a continuous, bounded modification on $\overline{D}$ if and only if \(d=1\). If \(d\ge2\), then \(\|u\|_{\infty}=\infty\) a.s.
\end{proposition}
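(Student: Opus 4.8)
The plan is to separate the two directions and to reduce everything to the diagonal singularity of $G_\Delta$. For $d=1$ the claim is immediate from the theory already developed: taking $\cL=\Delta$ places us in the spectral framework with $\gamma=1$, and since $\gamma=1>\tfrac12=\tfrac d2$, Theorem~\ref{path} supplies a modification of $u$ that is continuous on the compact set $\overline D$, hence bounded. This settles the \emph{if} direction.

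For $d\ge2$ I would show that a single jump of $\dot\xi$ creates a pole of $u$ that the rest of the field cannot cancel. Using the Lévy–Itô decomposition \eqref{levy-ito}, fix a threshold $\varepsilon$ with $\nu(\{|z|>\varepsilon\})>0$, and on the event that at least one jump of modulus exceeding $\varepsilon$ occurs single out the one of largest modulus, located at $Y_*\in D$ with size $Z_*$. Then
\[
u(x)=G_\Delta(x,Y_*)\,Z_*+W(x),
\]
where $W$ gathers the finitely many remaining large-jump kernels together with the compensated small-jump integral. Two structural facts drive the argument. First, $W$ is finite Lebesgue-almost everywhere on $D$: the other large-jump kernels $G_\Delta(\cdot,Y_i)$ lie in $L^1(D)$ (in fact in $L^2(D)$ when $d\le3$, since $\int_D G_\Delta^2\,dy<\infty$) and the small-jump term is precisely the field shown to lie in $L^1(D)$ in Theorem~\ref{existence-1}. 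Second, by the marking property of the Poisson random measure, conditionally on all jump sizes and on the small-jump configuration the location $Y_*$ is uniformly distributed on $D$ and independent of $W$.

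The core of the proof is a Lebesgue density argument showing the pole survives. Since $Y_*$ lies in the interior of $D$ almost surely and $d\ge2$, the classical two-sided bound for the Dirichlet Green's function gives $G_\Delta(x,Y_*)\to\infty$ as $x\to Y_*$, so for every level $L$ the superlevel set $\{x:G_\Delta(x,Y_*)|Z_*|>L\}$ contains a ball centred at $Y_*$ of positive radius. Because $W$ is finite almost everywhere, the sets $E_M=\{|W|\le M\}$ increase to a set of full measure, and their sets of Lebesgue density points do as well; as $Y_*$ is uniform and independent of $W$, with probability one $Y_*$ is a density point of some $E_M$. Fixing that $M$ and shrinking the radius so that $G_\Delta(\cdot,Y_*)|Z_*|>L+M$ on the ball, the intersection of the ball with $E_M$ has positive Lebesgue measure, and on it $|u|\ge G_\Delta(\cdot,Y_*)|Z_*|-|W|>L$. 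As $L$ is arbitrary, the set $\{|u|>L\}$ has positive Lebesgue measure for every $L$, i.e. $\|u\|_\infty=\infty$, which in particular excludes any bounded modification and yields the \emph{only if} direction.

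I expect the decisive obstacle to be the non-cancellation step, namely controlling the remainder $W$ near the random singularity $Y_*$. The two ingredients that make it work are the independence of $Y_*$ from $W$ furnished by Poisson marking, which lets me treat $W$ as a fixed almost-everywhere-finite function when locating $Y_*$, and the density theorem, which produces a positive-measure set near $Y_*$ on which $W$ stays bounded while $G_\Delta(\cdot,Y_*)$ blows up. A secondary point is the meaning of \emph{almost surely}: the construction requires at least one jump above the threshold, so for infinite-activity $\nu$ (for instance the symmetric $\alpha$-stable case) one lets $\varepsilon\downarrow0$ so that a jump is present almost surely, while for finite-activity $\nu$ the conclusion holds on the event that a jump occurs.
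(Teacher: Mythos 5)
Your proposal is correct, and for the $d=1$ direction it coincides with the paper's proof (both invoke Theorem \ref{path} with $\gamma=1>\tfrac{d}{2}$; the paper additionally sketches a direct alternative via continuity of $G_\Delta$ and Kolmogorov's criterion, which you do not need). For $d\ge2$ the underlying idea is the same --- the atoms of $J_\xi$ plant poles of $G_\Delta$ into $u$ --- but your execution is genuinely more complete than the paper's. The paper's proof stops at the observation that the large-jump term $x\mapsto\int_{D\times\{|z|>1\}}G_\Delta(x,y)\,z\,J_\xi(dy,dz)$ has exactly $J_\xi\bigl(D\times\{|z|>1\}\bigr)$ singularities; it never addresses whether the compensated small-jump part could cancel or mask those poles, nor what happens on the positive-probability event $J_\xi\bigl(D\times\{|z|>1\}\bigr)=0$. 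You close both gaps: the Poisson marking property makes the singular location $Y_*$ uniform on $D$ and independent of the remainder $W$, and the Lebesgue density argument produces a positive-measure set near $Y_*$ on which $W$ stays bounded while $G_\Delta(\cdot,Y_*)\,|Z_*|$ exceeds any level, so the essential supremum of $|u|$ is infinite (which, since a modification agrees with $u$ Lebesgue-a.e.\ almost surely, also rules out any continuous or bounded modification); letting $\varepsilon\downarrow0$ then yields the almost-sure statement for infinite-activity noise. Your closing caveat is also substantively right and in fact exposes that the statement as written is too strong: if $\nu(\bR_0)<\infty$, then with probability $e^{-|D|\,\nu(\bR_0)}>0$ there are no jumps at all, so $u\equiv0$ (recall $b=\sigma=0$ in this section) and $\|u\|_{\infty}=\infty$ cannot hold almost surely; the conclusion needs infinite activity or must be restricted to the event that at least one jump occurs. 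One minor imprecision: the conditioning under which $Y_*$ is uniform should include the locations of the \emph{other} large jumps (not only all jump sizes and the small-jump configuration), since $W$ depends on those locations; the marking property still delivers exactly the independence of $Y_*$ from $W$ that your density argument uses.
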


\begin{proof}
The existence of a continuous modification of \(u\) when \(\gamma=1\) follows immediately from Theorem \ref{path}. Alternatively, one can argue directly as follows. Since the Green function \(G_\Delta\) is continuous on \(D\times D\) only for \(d=1\), by the L\'evy It\^o decomposition, we have:
\[
u(x)
= \int_{D\times\{|z|\le1\}} G_\Delta(x,y)\,z\,\widetilde J_\xi(dy,dz)
+ \int_{D\times\{|z|>1\}} G_\Delta(x,y)\,z\,J_\xi(dy,dz).
\]
The second term is a finite sum of continuous functions of \(x\) when \(d=1\), and hence is continuous. For the first term, Kolmogorov’s continuity criterion guarantees a Hölder continuous modification on \(D\).

On the other hand, if \(d\ge2\), then \(G_\Delta\) has singularities along the diagonal \(x=y\). In this case, the “large‐jumps” term
\[
x\;\longmapsto\;\int_{D\times\{|z|>1\}} G_\Delta(x,y)\,z\,J_\xi(dy,dz)
\]
has exactly $J_\xi\bigl(D\times\{|z|>1\}\bigr)$ singularities, located at the atoms of the random measure \(J_\xi\).

\end{proof}

{\bf Acknowledgement.} The author is grateful to Yimin Xiao for drawing their attention to references \cite{CAD22, MR05}.

\end{document}